%
%
%
%
%
%
\RequirePackage{fix-cm}
\documentclass[smallextended]{svjour3}       
\smartqed  
%
%
%
\usepackage{amssymb,amsmath,amsfonts,mathrsfs}
\usepackage[utf8]{inputenc}
\usepackage[english]{babel}
\usepackage{graphicx}
%
%
%

\DeclareMathOperator{\rank}{rank}

\DeclareMathOperator{\inth}{int.hull}

\DeclareMathOperator{\size}{size}

\DeclareMathOperator{\poly}{poly}
\DeclareMathOperator{\mult}{Mult}

\DeclareMathOperator{\vol}{Vol}

\begin{document}

\title{FPT-algorithms for The Shortest Lattice Vector and Integer Linear Programming Problems}


\author{D. V. Gribanov}


\institute{D. V. Gribanov \at Lobachevsky State University of Nizhny Novgorod, 23 Gagarina Avenue, Nizhny Novgorod, Russian Federation, 603950,\\
National Research University Higher School of Economics, 136 Rodionova, Nizhny Novgorod, Russian Federation, 603093,\\
\email{dimitry.gribanov@gmail.com}
}

\date{Received: date / Accepted: date}

\maketitle

\begin{abstract}
In this paper, we present FPT-algorithms for special cases of the shortest vector problem (SVP) and the integer linear programming problem (ILP), when matrices included to the problems' formulations are near square. The main parameter is the maximal absolute value of rank minors of matrices included to the problem formulation. Additionally, we present FPT-algorithms with respect to the same main parameter for the problems, when the matrices have no singular rank sub-matrices.

\keywords{Integer Programming \and Shortest Lattice Vector Problem \and Matrix Minors \and FPT-algorithm}
\end{abstract}

\section{Introduction}
Let $A \in \mathbb{Z}^{d \times n}$ be the integral matrix. Its $ij$-th element is
denoted by $A_{i\,j}$, $A_{i\,*}$ is $i$-th row of $A$, and $A_{*\,j}$ is $j$-th column of $A$. The set of integer values started from a value $i$ and finished on $j$ is denoted by the symbol $i:j = \{i,i+1,\dots,j\}$. Additionally, for subsets $I \subseteq \{1,\dots,d\}$ and $J \subseteq \{1,\dots,n\}$, $A_{I\,J}$ denotes the sub-matrix of $A$ that was generated by all rows with numbers in $I$ and all columns with numbers in $J$. Sometimes, we will change the symbols $I$ and $J$ to the symbol $*$ meaning that we take the set of all rows or columns, respectively. Let $\rank(A)$  be the rank of an integral matrix $A$. The \emph{lattice spanned by columns} of $A$ is denoted $\Lambda(A) = \{A t : t \in \mathbb{Z}^{n} \}$. Let $||A||_{\max}$ denote the maximal absolute value of the elements of $A$. We refer to \cite{CAS71,GRUB87,SIEG89} for mathematical introductions to lattices.

An algorithm parameterized by a parameter $k$ is called \emph{fixed-parameter tractable} (FPT-\emph{algorithm}) if its complexity can be expressed by a function from the class $f(k)\, n^{O(1)}$, where $n$ is the input size and $f(k)$ is a function that depends on $k$ only.
A computational problem parameterized by a parameter $k$ is called \emph{fixed-parameter tractable} (FPT-\emph{problem}) if it can be solved by a FPT-algorithm. For more information about parameterized complexity theory, see \cite{PARAM15,PARAM99}.

{\bf Shortest Lattice Vector Problem}

The Shortest Lattice Vector Problem (SVP) consists in finding $x \in \mathbb{Z}^n \setminus \{0\}$ minimizing $||H x||$, where $H \in \mathbb{Q}^{d \times n}$ is given as an input. The SVP is known to be NP-hard with respect to randomized reductions, see \cite{AJTAI96}. The first polynomial-time approximation algorithm for SVP was proposed by A.~Lenstra, H.~Lenstra~Jr. and L.~Lov\'asz in the paper \cite{LLL82}.  Shortly afterwards, U.~Fincke and M.~Pohst \cite{FP83,FP85}, and R.~Kannan \cite{KANN83,KANN87} described the first exact SVP solvers. The R.~Kannan's solver has the complexity $2^{O(n\,\log n)} \poly(\size H)$. The first SVP solvers that achieve the complexity $2^{O(n)} \poly(\size H)$ were proposed by M.~Ajtai, R.~Kumar and D.~Sivakumar \cite{AJKSK01,AJKSK02}, and D.~Micciancio and P.~Voulgaris \cite{MICCVOUL10}. The previously discussed SVP solvers are useful for the $l_2$ Euclidean norm. Recent results about SVP-solvers for more general norms are presented in the papers \cite{BLNAEW09,DAD11,EIS11}. The paper of G.~Hanrot, X.~Pujol, D.~Stehl\'e \cite{SVPSUR11} is a good survey about SVP-solvers. 

Recently, a novel polynomial-time approximation SVP-solver was proposed by J.~Cheon and L.~Changmin in the paper \cite{CHLEE15}. The algorithm is parameterized by the lattice determinant, and its complexity and approximation factor are record for bounded determinant lattices.

In our work, we consider only integral lattices, whose generating matrices are near square. The goal of Section 2 is development of an exact FPT-algorithm for the SVP parameterized by the lattice determinant. Additionally, in Section 3 we develop a FPT-algorithm for lattices, whose generating matrices have no singular sub-matrices. The proposed algorithms work for the $l_p$ norm for any finite $p \geq 1$.

{\bf Integer Linear Programming Problem}

The Integer Linear Programming Problem (ILPP) can be formulated as $\min\{ c^\top x : H x \leq b,\, x \in \mathbb{Z}^n\}$ for integral vectors $c,b$ and an integral matrix $H$.

There are several polynomial-time algorithms for solving the linear programs. We mention L.~G.~Khachiyan's algorithm \cite{KHA80}, N.~Karmarkar's algorithm \cite{KAR84}, and Y.~E.~Nesterov's algorithm \cite{NN94,PAR91}. Unfortunately, it is well known that the ILPP is NP-hard problem. Therefore, it would be interesting to reveal polynomially solvable cases of the ILPP. Recall that an integer matrix is called \emph{totally unimodular} if any of its minor is equal to $+1$ or $-1$ or $0$. It is well known that all optimal solutions of any linear program with a totally unimodular constraint matrix are integer. Hence, for any primal linear program and the corresponding primal integer linear program with a totally unimodular constraint matrix, the sets of their optimal solutions coincide. Therefore, any polynomial-time linear optimization algorithm (like algorithms in \cite{KHA80,KAR84,NN94,PAR91}) is also an efficient algorithm for the ILPP.

The next natural step is to consider the \emph{bimodular} case, i.e. the ILPP having constraint matrices with the absolute values of all rank minors in the set $\{0, 1, 2\}$. The first paper that discovers fundamental properties of the bimodular ILPP is the paper of S.~Veselov and A.~Chirkov \cite{VESCH09}. Very recently, using results of \cite{VESCH09}, a strongly polynomial-time solvability of the bimodular ILPP was proved by S.~Artmann, R.~Weismantel, R.~Zenklusen in the paper \cite{AW17}.  

More generally, it would be interesting to investigate the complexity of the problems with constraint matrices having bounded minors. The maximum absolute value of rank minors of an integer matrix can be interpreted as a proximity measure to the class of unimodular matrices. Let the symbol ILPP$_{\Delta}$ denote the ILPP with constraint matrix each rank minor of which has the absolute value at most $\Delta$. A conjecture arises that for each fixed natural number $\Delta$ the ILPP$_{\Delta}$ can be solved in polynomial-time \cite{SHEV96}. There are variants of this conjecture, where the augmented matrices $\dbinom{c^\top}{A}$ and $(A \; b)$ are considered \cite{AZ11,SHEV96}.  Unfortunately, not much is known about the complexity of the ILPP$_{\Delta}$. For example, the complexity statuses of the ILPP$_{3}$ are unknown. A next step towards a clarification of the complexity was done by S.~Artmann, F.~Eisenbrand, C.~Glanzer, O.~Timm, S.~Vempala, and R.~Weismantel in the paper \cite{AE16}. Namely, it has been shown that if the constraint matrix, additionally, has no singular rank sub-matrices, then the ILPP$_{\Delta}$ can be solved in polynomial-time. Some results about polynomial-time solvability of boolean ILPP$_{\Delta}$ were obtained in the papers \cite{AZ11,BOCK14,GRIBM17}. Additionally, the class of ILPP$_{\Delta}$ has a set of interesting properties. In the papers \cite{GRIB13,GRIBV16}, it has been shown that any lattice-free polyhedron of the ILPP$_{\Delta}$ has relatively small width, i.e., the width is bounded by a function that is linear by the dimension and exponential by $\Delta$. Interestingly, due to \cite{GRIBV16}, the width of an empty lattice simplex can be estimated by $\Delta$ for this case. In the paper \cite{GRIBC16}, it has been shown that the width of any simplex induced by a system with bounded minors can be computed by a polynomial-time algorithm. Additional result of \cite{GRIBC16} states that any simple cone can be represented as a union of $n^{2 \log \Delta}$ unimodular cones, where $\Delta$ is the parameter that bounds minors of the cone constraint matrix. As it was mentioned in \cite{AW17}, due to E.~Tardos results \cite{TAR86}, linear programs with constraint matrices whose minors are bounded by a constant $\Delta$ can be solved in strongly polynomial time. N.~Bonifas et al. \cite{BONY14} showed that polyhedra defined by a constraint matrix that is totally $\Delta$-modular have small diameter, i.e., the diameter is bounded by a polynomial in $\Delta$ and the number of variables. Very recently, F.~Eisenbrand and S.~Vempala \cite{EIS16} showed a randomized simplex-type linear programming algorithm, whose running time is strongly polynomial even if all minors of the constraint matrix are bounded by any constant.

The second goal of our paper (Section 4) is to improve results of the paper \cite{AE16}. Namely, we are going to present a FPT-algorithm for the ILPP$_{\Delta}$ with the additional property that the problem's constraint matrix has no singular rank sub-matrices. Additionally, we improve some inequalities established in \cite{AE16}.

The authors consider this paper as a part for achieving the general aim to find out critical values of parameters, when a given problem changes complexity. For example, the integer linear programming problem is polynomial-time solvable on polyhedrons with all-integer vertices, due to \cite{KHA80}. On the other hand, it is NP-complete in the class of polyhedrons with denominators of extreme points equal $1$ or $2$, due \cite{PAD89}. The famous $k$-satisfiability problem is polynomial for $k \leq 2$, but is NP-complete for all $k > 2$. A theory, when an NP-complete graph problem becomes easier, is investigated for the family of hereditary classes in the papers \cite{A,ABKL,AKL,KLMT,M1,M2,M3,M4,MP,M5,MS}.

\section{FPT-algorithm for the SVP}

Let $H \in \mathbb{Z}^{d \times n}$. The SVP related to the $L_p$ norm can be formulated as follows:

\begin{equation}\label{ISVP}
\min\limits_{x \in \Lambda(H) \setminus \{0\} } ||x||_p,
\end{equation} or equivalently 
\[
\min\limits_{x \in \mathbb{Z}^n \setminus \{0\} } ||H x||_p.
\]

Without loss of generality, we can assume that the following properties hold:

1) the matrix $H$ is already reduced to the Hermite normal form (HNF) \cite{SCHR98,STORH96,ZHEN05},

2) the matrix $H$ is a full rank matrix and $d \geq n$,

3) using additional permutations of rows and columns, the HNF of the matrix $H$ can be reduced to the following form: 

\begin{equation} \label{HNF}
H = \begin{pmatrix}
1            & 0                   & \dots         & 0           & 0          & 0            & \dots & 0\\
0            & 1                   & \dots         & 0           & 0          & 0            & \dots & 0\\
\hdotsfor{8} \\
0            &        0            & \dots         & 1           & 0          & 0            & \dots & 0\\
a_{1\,1}  &   a_{1\,2}        & \dots        & a_{1\,k}  & b_{1\,1} & 0           & \dots & 0\\
a_{2\,1}  &   a_{1\,2}        & \dots        & a_{2\,k}  & b_{2\,1} & b_{2\,2} & \dots & 0\\
\hdotsfor{8} \\
a_{s\,1}  &   a_{1\,2}        & \dots        & a_{s\,k}   & b_{s\,1} & b_{s\,2} & \dots & b_{s\,s}\\
\bar a_{1\,1} & \bar a_{1\,2}           & \dots        & \bar a_{1\,k}   & \bar b_{1\,1} & \bar b_{1\,2} & \dots & \bar b_{1\,s}\\
\hdotsfor{8} \\
\bar a_{m\,1} & \bar a_{m\,2}           & \dots        & \bar a_{m\,k}   & \bar b_{m\,1} & \bar b_{m\,2} & \dots & \bar b_{m\,s}\\
\end{pmatrix},
\end{equation}

where $k + s = n$ and $k + s + m = d$. 

Let $\Delta$ be the maximal absolute value of $n \times n$ minors of $H$ and let $\delta = |\det(A_{1:n\,*})|$, let also $A \in \mathbb{Z}^{s \times k}$, $B \in \mathbb{Z}^{s \times s}$, $\bar A \in \mathbb{Z}^{m \times k}$, and $\bar B \in \mathbb{Z}^{m \times s}$ be the matrices defined by the elements $\{a_{i\,j}\}$, $\{b_{i\,j}\}$, $\{\bar a_{i\,j}\}$, and $\{\bar b_{i\,j}\}$, respectively. Hence, $B$ is lower triangular.

The following properties are standard for the HNF of any matrix:

1) $0 \leq a_{i\,j} \leq b_{i\,i}$ for any $i \in 1:s$ and $j \in 1:k$,

2) $0 \leq b_{i\,j} \leq b_{i\,i}$ for any $i \in 1:s$ and $j \in 1:i$,

3) $\Delta \geq \delta = \prod_{i=1}^s b_{i\,i}$, and hence $ s \leq \log_2 \Delta$.

In the paper \cite{AE16}, it was showed that $||(\bar A \; \bar B)||_{\max} \leq B_q$, where $q = \lceil \log_2 \Delta \rceil$ and the sequence $\{B_i\}$ is defined for $i \in 0:q$ as follows:
\[
B_0 = \Delta,\quad B_i = \Delta + \sum_{j=0}^{i-1} B_j \Delta^{\log_2 \Delta} (\log_2 \Delta)^{(\log_2 \Delta /2)}.
\] 

It is easy to see that $B_q = \Delta (\Delta^{\log_2 \Delta} (\log_2 \Delta)^{(\log_2 \Delta /2)}+1)^{\lceil \log_2 \Delta \rceil}$.

We will show that the estimate on $||(\bar A \; \bar B)||_{\max}$ can be significantly improved by making a bit more accurate analysis as in \cite{AE16}.

\begin{lemma}\label{HNFElem}
Let $j \in 1:m$, then the following inequalities are true:
\[
\bar b_{j\,i} \leq \frac{\Delta}{2} (3^{s-i} + 1)
\] for $i \in 1:s$, and
\[
\bar a_{j\,i} \leq \frac{\Delta}{2} (3^s + 1)
\] for $i \in 1:k$.

Hence, $||(\bar A \; \bar B)||_{\max} \leq \frac{\Delta}{2} (\Delta^{\log_2 3} + 1) < \Delta^{1 + \log_2 3}$.
\end{lemma}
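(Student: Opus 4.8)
The plan is to bound every entry of the bottom block $(\bar A\;\bar B)$ by exploiting that each $n\times n$ minor of $H$ has absolute value at most $\Delta$, in combination with the lower-triangular shape of $B$ and the HNF inequalities $0\le b_{r\,c}\le b_{r\,r}$ and $0\le a_{r\,c}\le b_{r\,r}$. Fix a row index $j\in 1:m$ of the bottom block. Selecting the $k$ identity rows of $H$ together with any $s$ of the remaining $s+m$ rows, and expanding the resulting $n\times n$ minor along the identity block, shows that every $s\times s$ minor of the matrix obtained by stacking $B$ over $\bar B$ has absolute value at most $\Delta$; this is the source of all the inequalities below.

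To control $\bar b_{j\,i}$ I would use the determinant $D_i$ of the matrix obtained from $B$ by overwriting its $i$-th row with $(\bar b_{j\,1},\dots,\bar b_{j\,s})$, so that $|D_i|\le\Delta$. Expanding $D_i$ along the inserted row, the lower-triangularity of $B$ forces the cofactors of $\bar b_{j\,l}$ to vanish for $l<i$ (the untouched top rows become supported on too few columns), makes the cofactor of $\bar b_{j\,i}$ equal to $\pm\prod_{t\neq i}b_{t\,t}$, and leaves, for $l>i$, signed minors $C_{i\,l}$ of $B$. Hence $|D_i|\le\Delta$ becomes
\[
\Bigl(\textstyle\prod_{t\neq i}b_{t\,t}\Bigr)\,|\bar b_{j\,i}|\le\Delta+\sum_{l>i}|C_{i\,l}|\,|\bar b_{j\,l}|.
\]

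The technical heart is estimating the cofactor minors $C_{i\,l}$. Factoring the diagonal entries out of the rows of $C_{i\,l}$ (legitimate since $0\le b_{r\,c}\le b_{r\,r}$) reduces the task to bounding a subdeterminant of a matrix with entries in $[0,1]$ that inherits the triangular zero-pattern of $B$; such a subdeterminant can exceed $1$, and it is exactly this effect that replaces the base $2$ one would naively expect by the base $3$ in the final estimate. Establishing a bound of the form $|C_{i\,l}|\le 2\prod_{t\neq i}b_{t\,t}$ and dividing the displayed inequality by $\prod_{t\neq i}b_{t\,t}\ge 1$ gives the linear recurrence $|\bar b_{j\,i}|\le\Delta+2\sum_{l>i}|\bar b_{j\,l}|$ with base case $|\bar b_{j\,s}|\le\Delta$; solving it by downward induction on $i$ yields $|\bar b_{j\,i}|\le\frac{\Delta}{2}(3^{s-i}+1)$ (a careful accounting of the cofactors, rather than the crude per-term factor $2$, is what produces the sharp constant $\frac12(3^{s-i}+1)$ in place of $3^{s-i}$). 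I expect this cofactor analysis to be \emph{the main obstacle}.

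For $\bar a_{j\,i}$ with $i\in 1:k$ I would instead take the identity rows $1:k$ with row $i$ omitted, all $s$ rows of the $B$-block, and the bottom row $j$; expanding this $n\times n$ minor along the retained identity rows leaves an $(s+1)\times(s+1)$ determinant, of absolute value at most $\Delta$, formed by column $i$ and the $s$ columns of $B$. Expanding it along column $i$, the cofactor of $\bar a_{j\,i}$ is $\pm\det B=\pm\delta$, while the cofactor of each $a_{t\,i}$ is, up to sign, one of the replace-row determinants already bounded by $\Delta$; with $0\le a_{t\,i}\le b_{t\,t}$ this gives $\delta\,|\bar a_{j\,i}|\le\Delta\bigl(1+\sum_{t}b_{t\,t}\bigr)$. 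Since $\frac{1+\sum_t b_{t\,t}}{\delta}\le 1+s\le\frac12(3^s+1)$, the asserted bound on $\bar a_{j\,i}$ follows, which is morally the ``$i=0$'' instance of the previous recurrence. Finally, $s\le\log_2\Delta$ gives $3^s\le\Delta^{\log_2 3}$, so the largest of these bounds, $\frac{\Delta}{2}(3^s+1)$, yields $||(\bar A\;\bar B)||_{\max}\le\frac{\Delta}{2}(\Delta^{\log_2 3}+1)<\Delta^{1+\log_2 3}$.
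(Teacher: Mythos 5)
Your setup is the same as the paper's (replace a row of $B$ by the bottom row, observe the resulting determinant is an $n\times n$ minor of $H$ up to expansion along the identity block, expand along the inserted row, and solve a downward recurrence), and your treatment of $\bar a_{j\,i}$ is correct --- in fact $\delta|\bar a_{j\,i}|\le\Delta(1+\sum_t b_{t\,t})$ gives $|\bar a_{j\,i}|\le(1+s)\Delta$, which is sharper than the paper's own bound. But the core of the lemma is the bound on $\bar b_{j\,i}$, and there your proof stops exactly where the actual work begins: the estimate of the cofactor minors $C_{i\,l}$, which you defer as ``the main obstacle.'' The uniform bound you float, $|C_{i\,l}|\le 2\prod_{t\ne i}b_{t\,t}$, is false. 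After factoring the diagonal entries out of the rows, $C_{i\,l}$ becomes a lower Hessenberg matrix with entries in $[0,1]$ whose first $l-i-1$ over-diagonal entries equal $1$ (they are normalized diagonal entries of $B$), and such determinants grow with $l-i$; for instance
\[
\det\begin{pmatrix}1&1&0&0\\0&1&1&0\\1&0&1&1\\0&1&0&1\end{pmatrix}=3,
\]
and this pattern is realizable as such a minor (take all $b_{t\,t}=1$ and the sub-diagonal entries of $B$ as above, which is allowed by the paper's convention $0\le b_{i\,j}\le b_{i\,i}$; under the strict convention, entries $(N-1)/N$ give determinant arbitrarily close to $3$). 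So already at $l-i=4$ the ratio $|C_{i\,l}|/\prod_{t\ne i}b_{t\,t}$ exceeds $2$, and no uniform constant works.

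What the paper proves instead is the distance-dependent bound $|C_{i\,l}|\le 2^{\,l-i-1}\prod_{t\ne i}b_{t\,t}$: the deleted-column minor is lower triangular plus an over-diagonal with $l-i-1$ leading nonzeros, and each such nonzero at most doubles the determinant. This weight is not a cosmetic refinement; it is exactly what produces the stated constants, because the recurrence $B_t=\Delta+\sum_{j<t}2^{\,t-j-1}B_j$ telescopes to $B_t=3B_{t-1}-\Delta$, whence $B_t=\frac{\Delta}{2}(3^t+1)$. Note also that even if your uniform bound of $2$ were true, the recurrence $x_i=\Delta+2\sum_{l>i}x_l$ solves to $x_i=3^{s-i}\Delta$, which does not yield the lemma's claim $|\bar b_{j\,i}|\le\frac{\Delta}{2}(3^{s-i}+1)$; so the ``careful accounting of the cofactors'' you invoke is not a sharpening of your argument but its entire missing content. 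As written, the proof of the $\bar b$ inequalities, and hence of the lemma, is incomplete.
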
 
\begin{proof}
The main idea and the skeleton of the proof is the same as in the paper \cite{AE16}.

Assume that $H$ has the form as in \eqref{HNF}. Let $c$ be any row of $\bar A$, let also $w$ be the row of $\bar B$ with the same row index as $c$.

Let $H_i$ denote the square sub-matrix of $H$ that consists of the first $n$ rows of $H$, except the $i$-th row, which is replaced by the row $(c\;w)$. Let also $b_i$ denote $b_{i\,i}$. Since $|det(H_n)| = b_{1}\dots b_{s-1} |w_s|$, it follows that $|w_s| \leq \Delta$.

Similar to reasonings of the paper \cite{AE16}, let us consider two cases:

{\bf Case 1: $i > k$.}

We can express $det(H_i)$ as follows:
\[
|b_1|\dots|b_{r-1}| | \det \underbrace{\begin{pmatrix}
w_{r} & \hdotsfor{4} & w_s \\
* & b_{r+1} & & & & \\
* & * & \ddots & & & \\
\hdotsfor{4} & b_{s-1} & \\
\hdotsfor{5} & b_s \\
\end{pmatrix}}_{:= \bar H} |,
\]
where $r=i - k$.

Let ${\bar H}^j$ be the sub-matrix of $\bar H$ obtained by deletion of the first row and the column indexed by $j$. Then,
\[
\Delta \geq |\det \bar H| = \left| w_r {\bar H}^1 + \sum_{j=2}^{s-r+1} (-1)^{j+1} w_{r+j-1} {\bar H}^j  \right| \\
\geq |w_r {\bar H}^1| - |\sum_{j=2}^{s-r+1} (-1)^{j+1} w_{r+j-1} {\bar H}^j|,
\]

and thus 
\[
|w_r| \leq \frac{1}{|\det {\bar H}^1|} \left(\Delta + \sum_{j=2}^{s-r+1} |w_{r+j-1}| |{\bar H}^j| \right).
\]

Let $ \bar \delta = |\det {\bar H}^1| = b_{r+1}\dots b_s$. 
We note that for any $2 \leq j \leq s-r+1$ the matrix ${\bar H}^j$ is a lower-triangular matrix with an additional over-diagonal. The over-diagonal is a vector that consists of at most $j-2$ first nonzeros and the other elements are zeros.

The following example expresses the structure of the ${\bar H}^5$ matrix:
\[
\begin{pmatrix}
\boldsymbol{*} &*  &    &  & & & &\\
*&  \boldsymbol{*} & *  &  & & & &\\
* & * & \boldsymbol{*} & *  & & & &\\
* & * & * & \boldsymbol{*} & & & &\\
* & * & * & * & \boldsymbol{*} & & &\\
\hdotsfor{5} \\
* & \hdotsfor{6} &\boldsymbol{*} & \\
* & \hdotsfor{7} & \boldsymbol{*}\\

\end{pmatrix},
\]
where we can see three additional nonzero over-diagonal elements (the diagonal is bold).

It is easy to see that $|\det {\bar H}^j| \leq 2^{j-2} \bar \delta$ for $2 \leq j \leq s-r+1$. Hence, the recurrence for $w_r$ takes the following form:
\[
|w_r| \leq \Delta + \sum_{j=2}^{s-r+1} 2^{j-2} |w_{r+j-1}| = \Delta + \sum_{j=0}^{s-r-1} 2^{j} |w_{r+j+1}|.
\]

{\bf Case 2: $i \leq k$.} 

Similar to the previous case, we can express $|\det H_i|$ as
\[
|\det \underbrace{\begin{pmatrix}
c_i & w_1 & \dots & \dots & w_s \\
*   & b_1       \\
*   & *    & \ddots \\
\hdotsfor{3} & b_{s-1} \\
\hdotsfor{4} & b_s \\
\end{pmatrix}|}_{:= \bar H}.
\] 

Let again, ${\bar H}^j$ be the sub-matrix of the matrix $\bar H$ obtained by deletion of the first row and the column indexed by $j$, then
\[
\Delta \geq |\det \bar H| = \left| c_i \delta + \sum_{j=2}^{s+1} (-1)^{j+1} w_{j-1} {\bar H}^j \right|
\geq |c_i \delta| - |\sum_{j=2}^{s+1} (-1)^{j+1} w_{j-1} {\bar H}^j|,
\]
and thus 
\[
|c_i| \leq \frac{1}{\delta} \left(\Delta + \sum_{j=2}^{s+1} |w_{j-1}| |{\bar H}^j| \right).
\]

As in the {\bf case 1}, we have the inequality $|\det {\bar H}^j| \leq 2^{j-2} \delta$ for $2 \leq j \leq s+1$. Hence, we have the following inequality:
\[
|c_i| \leq \Delta + \sum_{j=2}^{s+1} 2^{j-2} |w_{j-1}| = \Delta + \sum_{j=0}^{s-1} 2^j |w_{j+1}|.
\] 

Let $\{B_i\}_{i=0}^{s}$ be the sequence defined as follows:
\[
B_0 = \Delta,\quad B_i = \Delta + \sum_{j=0}^{i-1} 2^{i-j-1} B_j.
\]

Using the final inequality from Case 1, we have $|w_i| \leq B_{s-i}$ for any $i \in 1:s$. And using the final inequality from Case 2, we have $|c_i| \leq B_s$ for any $i \in 1:k$.

For the sequence $\{B_i\}$ the following equalities are true:
\[
B_i = \Delta + B_{i-1} + \sum_{j=0}^{i-2} 2^{i-j-1} B_j = \Delta + B_{i-1} + 2(B_{i-1} - \Delta) = 3 B_{i-1} - \Delta.
\]

Finally,
\[
B_i = 3^i B_0 - \Delta \sum_{j=0}^{i-1} 3^j = \Delta( 3^i - \frac{3^i - 1}{2}) = \frac{\Delta}{2}(3^i + 1),
\]
and the lemma follows. 
\end{proof}

\begin{theorem}\label{SimpleSVP}
If $n > \Delta^{1+m(1+\log_2 3)} + \log_{2} \Delta$, then there exists a polynomial-time algorithm to solve the problem \eqref{ISVP} with the bit-complexity $O(n \log n \log \Delta (m + \log \Delta))$.
\end{theorem}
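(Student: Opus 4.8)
The plan is to exploit the special HNF structure in \eqref{HNF} together with the bound of Lemma \ref{HNFElem} through a pigeonhole argument. Observe that $H e_i = (e_i;\, A_{*\,i};\, \bar A_{*\,i})$ for each $i \in 1:k$, so the first $k$ columns of $H$ all carry distinct unit vectors $e_i$ in their top $k$ coordinates, while their ``lower parts'' $(A_{*\,i};\, \bar A_{*\,i}) \in \mathbb{Z}^{s+m}$ range over a bounded set. I would bound the number $N$ of distinct lower parts as follows: the reduced column $A_{*\,i}$ takes at most $\prod_{j=1}^{s} b_{j\,j} = \delta \le \Delta$ distinct values (its entries being reduced modulo the diagonal entries $b_{j\,j}$), while by Lemma \ref{HNFElem} each entry of $\bar A_{*\,i}$ is bounded in absolute value by $\tfrac{\Delta}{2}(3^s + 1) \le \tfrac{\Delta}{2}(\Delta^{\log_2 3} + 1) < \Delta^{1 + \log_2 3}$, using $3^s \le \Delta^{\log_2 3}$, which follows from $s \le \log_2 \Delta$. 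Hence each $\bar A_{*\,i}$ takes at most $(\Delta^{1 + \log_2 3})^m$ values and $N \le \Delta^{1 + m(1 + \log_2 3)}$.

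Since $s \le \log_2 \Delta$, the hypothesis $n > \Delta^{1 + m(1 + \log_2 3)} + \log_2 \Delta$ gives $k = n - s > \Delta^{1 + m(1 + \log_2 3)} \ge N$. By the pigeonhole principle there exist indices $i \ne i'$ in $1:k$ with $(A_{*\,i};\, \bar A_{*\,i}) = (A_{*\,i'};\, \bar A_{*\,i'})$, so that $H(e_i - e_{i'}) = (e_i - e_{i'};\, 0;\, 0)$ is a nonzero lattice vector of $L_p$-norm $2^{1/p}$. Next I would argue that this essentially solves the SVP. Because $\Lambda(H) \subseteq \mathbb{Z}^d$, every nonzero lattice vector $v$ has integer coordinates, so $||v||_p^p = \sum_i |v_i|^p$ is a sum of nonnegative integers; if two of these are nonzero then $||v||_p \ge 2^{1/p}$, whence $||v||_p < 2^{1/p}$ forces $v = \pm e_j$ for a single coordinate $j$. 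Consequently the optimum of \eqref{ISVP} equals $1$ exactly when $\Lambda(H)$ contains a coordinate vector, and equals $2^{1/p}$ otherwise, the latter value being attained by the pigeonhole vector above.

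It then remains to decide the unit-vector case and to produce the witness. Using the triangular form of $H$, for $j \le k$ one has $e_j \in \Lambda(H)$ iff the lower part of column $j$ vanishes, while for a coordinate $j$ in the lower block one is reduced (after forcing the top $k$ coordinates of $x$ to $0$) to solving the triangular system $\binom{B}{\bar B} y = e_{j-k}$ over $\mathbb{Z}$, which has no solution for $j$ in the $\bar B$-rows and is checked directly for the $B$-rows. The algorithm thus scans the first $k$ columns, tests for a zero or unit lower part, and otherwise locates a colliding pair by sorting the $k$ lower parts lexicographically; it outputs $e_j$ if a unit vector is found and $e_i - e_{i'}$ otherwise. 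Each lower part has $s + m \le \log_2 \Delta + m$ coordinates, each an integer of bit-length $O(\log \Delta)$ by Lemma \ref{HNFElem}, so one comparison costs $O((m + \log \Delta)\log \Delta)$ bit operations and the sort uses $O(n \log n)$ comparisons, giving the claimed bound $O(n \log n \log \Delta (m + \log \Delta))$.

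The main obstacle is the counting step: pinning the exponent down to exactly $1 + m(1 + \log_2 3)$ relies on the sharpened entry bound of Lemma \ref{HNFElem} and on careful control of the ranges (and signs) of the $\bar A$-entries, since a naive count of admissible columns carries an extra factor that must be absorbed into the strict slack provided by the hypothesis $n > \Delta^{1 + m(1 + \log_2 3)} + \log_2 \Delta$. One must also take care that the easily found short vector is \emph{genuinely} shortest, i.e. correctly dispose of the norm-$1$ case rather than merely exhibit a vector of norm $2^{1/p}$.
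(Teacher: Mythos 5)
Your proposal is correct and follows essentially the same route as the paper's own proof: a pigeonhole argument on the lower parts $(A_{*\,i};\,\bar A_{*\,i})$ of the first $k$ columns using the bound of Lemma \ref{HNFElem}, yielding a collision vector of norm $\sqrt[p]{2}$, combined with the characterization that a norm-$1$ lattice vector exists iff some lower part vanishes, and a sort in $O(n\log n)$ comparisons of cost $O(\log\Delta\,(m+\log\Delta))$ each. Your treatment is in fact slightly more complete than the paper's (you also rule out unit vectors supported on the lower block, which the paper handles only implicitly), and the sign/counting subtlety you flag at the end is present in the paper's argument as well, so it is not a gap relative to it.
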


\begin{proof}
If $n > \Delta^{1+m(1+\log_2 3)} + \log_2 \Delta$, then $k > \Delta^{1+m(1+\log_2 3)}$. 

Consider the matrix $\bar H = \dbinom{A}{\bar A}$. By Lemma \ref{HNFElem}, there are strictly less than $\Delta^{1+m(1+\log_2 3)}$ possibilities to generate a column from $\bar A$, so if $k > \Delta^{1+m(1+\log_2 3)}$, then $\bar H$ has two equivalent columns. Hence, the lattice $\Lambda(H)$ contains the vector $v$, such that $||v||_p = \sqrt[p]{2}$ ($||v||_\infty = 1$). We can find equivalent rows using any sorting algorithm with the compares-complexity equal to $O(n \log n)$, where the bit-complexity of the two vectors compare operation is $O(\log \Delta (m + \log \Delta))$. The lattice $\Lambda(H)$ contains a vector of the norm $1$ (for $p \not= \infty$) if and only if the matrix $\bar H$ contains the zero column. Definitely, let $\bar H$ have no zero columns and let $u$ be the vector of the norm $1$ induced by the lattice $\Lambda(H)$. Then, $u \in H_{*\,i} + H_{*\,(k+1):n} t$ for the integral, nonzero vector $t$ and $i \in 1:k$. Let $j \in 1:(n-k)$ be the first index, such that $t_j \not= 0$. Since $H_{j\,j} > H_{j\,i}$, we have $u_j \not= 0$ and $||u||_p \geq  \sqrt[p]{2}$, this is a contradiction.  
\end{proof}

In the case, when $m = 0$ and $H$ is the square nonsingular matrix, we have the following trivial corollary:
\begin{corollary}
If $n \geq \Delta + \log_{2} \Delta$, then there exists a polynomial-time algorithm to solve the problem \eqref{ISVP} with the bit-complexity $O(n \log n \log^2 \Delta)$.
\end{corollary}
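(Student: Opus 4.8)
The plan is to specialize Theorem \ref{SimpleSVP} to $m = 0$, where almost everything collapses. With $m = 0$ the matrix $H$ is square and nonsingular, the block $\bar A$ (hence $\bar B$) is empty, and the matrix $\bar H = \dbinom{A}{\bar A}$ of the theorem's proof reduces to $\bar H = A \in \mathbb{Z}^{s \times k}$. Substituting $m = 0$ into the complexity bound $O(n \log n \log\Delta\,(m + \log\Delta))$ immediately gives $O(n\log n\log^2\Delta)$, so only the hypothesis needs attention: the theorem requires $n > \Delta^{1+m(1+\log_2 3)} + \log_2\Delta = \Delta + \log_2\Delta$, whereas the corollary asserts the weaker $n \ge \Delta + \log_2\Delta$. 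For every $n$ strictly above the threshold the theorem applies verbatim, so the only point left to verify is the boundary instance, and this is where I would spend the effort.

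At the threshold I would first use the HNF property $s \le \log_2\Delta$ to note $k = n - s \ge \Delta \ge \delta$, where $\delta = \prod_{i=1}^s b_{i\,i}$. Since the reduced HNF entries satisfy $0 \le a_{i\,j} < b_{i\,i}$, the columns of $A$ are integer vectors lying in the box $\prod_{i=1}^s [0, b_{i\,i})$, a set of exactly $\delta$ points. If two of the $k$ columns of $A$ coincide, then, exactly as in the proof of Theorem \ref{SimpleSVP}, their difference is a lattice vector $H(e_{j_1} - e_{j_2})$ whose only nonzero coordinates are a $+1$ and a $-1$ in the identity block, giving a vector $v$ with $||v||_p = \sqrt[p]{2}$. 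Otherwise all $k$ columns are distinct; since there are only $\delta$ possible column values and $k \ge \delta$, this forces $k = \delta$ and every admissible residue to occur exactly once. In particular the zero residue occurs, i.e. $A$ has a zero column $j$, and then $H_{*\,j} = e_j$ is a lattice vector of norm $1$. Thus under the weaker hypothesis $n \ge \Delta + \log_2\Delta$ we always exhibit either a norm-$\sqrt[p]{2}$ or a norm-$1$ lattice vector.

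Optimality and the running time are then inherited from Theorem \ref{SimpleSVP}: any nonzero vector of $\Lambda(H) \subseteq \mathbb{Z}^n$ has norm at least $1$, and the characterization established there (a norm-$1$ lattice vector exists if and only if $\bar H = A$ has a zero column) shows that a shortest vector has norm $1$ when $A$ has a zero column and norm $\sqrt[p]{2}$ otherwise. Both situations are detected by sorting the $k \le n$ columns of $A$ and scanning for a zero column or an adjacent equal pair; each column has $s \le \log_2\Delta$ entries of bit-size $O(\log\Delta)$, so the $O(n\log n)$ comparisons cost $O(\log^2\Delta)$ bit operations apiece, for a total of $O(n\log n\log^2\Delta)$. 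The only genuinely new point beyond the mechanical substitution $m = 0$ is the boundary argument that a maximal set of distinct residues must contain the zero residue, which is what licenses replacing the strict inequality of the theorem by the non-strict one in the corollary's hypothesis; I expect this to be the one step requiring care.
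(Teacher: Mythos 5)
Your proposal is correct, and its core coincides with the paper's (unstated) argument: the paper offers no proof at all, presenting the corollary as the immediate specialization $m=0$ of Theorem~\ref{SimpleSVP}, under which the hypothesis becomes $n > \Delta + \log_2 \Delta$ and the complexity bound collapses to $O(n \log n \log^2 \Delta)$, exactly as in your first paragraph. Where you genuinely go beyond the paper is the boundary case: the corollary is stated with the non-strict inequality $n \geq \Delta + \log_2 \Delta$, which the theorem's strict hypothesis does not literally cover when $\Delta + \log_2 \Delta$ is an integer (i.e.\ when $\Delta$ is a power of $2$) and $n$ sits exactly at the threshold --- a point the paper silently glosses over. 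Your saturation argument closes this gap correctly: with $m=0$ the unique $n \times n$ minor is $\det H$, so $\Delta = \delta = \prod_{i=1}^{s} b_{i\,i}$, the columns of $A$ range over exactly $\delta$ residues, and $k = n - s \geq \Delta = \delta$; hence either two columns of $A$ coincide, yielding a lattice vector of norm $\sqrt[p]{2}$, or all $\delta$ residues occur, forcing a zero column of $A$ and hence $e_j \in \Lambda(H)$ of norm $1$. Combined with the characterization from the theorem's proof (a norm-$1$ vector exists iff $A$ has a zero column), the same sort-and-scan procedure decides which case holds within the stated bit-complexity. One small caveat: you rely on the strict normalization $0 \leq a_{i\,j} < b_{i\,i}$ to count the admissible columns as exactly $\delta$, whereas the paper writes this HNF property non-strictly; the strict form is the standard one and can always be arranged, but since the saturation step fails without it, you should state explicitly that you work with the strictly reduced HNF.
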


Let $x^*$ be an optimal vector of the problem \eqref{ISVP}. The most classical G.~Minkowski's theorem in geometry of numbers states that:
\[
||x^*||_p \leq 2 \left(\frac{\det \Lambda(H)}{\vol(B_p)}\right)^{1/n},
\]
where $B_p$ is the unit sphere for the $l_p$ norm.

Using the inequalities $\det \Lambda(H) = \sqrt{\det H^\top H} \leq \Delta \sqrt{\dbinom{d}{n}} \leq \Delta \left(\cfrac{e d}{n}\right)^{n/2}$, we can conclude  that
\[
||x^*||_p \leq 2 \sqrt{\frac{e d}{n}}  \sqrt[n]{\frac{\Delta}{\vol(B_p)}}.
\]  

On the other hand, by Lemma \ref{HNFElem}, the last column of $H$ has the norm equals $\Delta \sqrt[p]{m+1}$. 

Let 
\begin{equation}\label{MConst} 
M = \min \Bigl\{\Delta \sqrt[p]{m+1},\, 2 \sqrt{\frac{e d}{n}}  \sqrt[n]{\frac{\Delta}{\vol(B_p)}}\Bigr\}
\end{equation} be the minimum value between these two estimates on a shortest vector norm. 

\begin{lemma}\label{SVBounds}
Let $x^* = \binom{\alpha}{\beta}$ be an optimal solution of \eqref{ISVP}, then:

1) $||\alpha||_1 \leq M^p$,

2) $|\beta_i| \leq 2^{i-1}(M^p + M/2)$, for any $i \in 1:s$,

3) $||\beta||_1 \leq 2^s (M^p + M/2) \leq \Delta (M^p + M/2) < 2 \Delta M^p$ 

and $||x^*||_1 \leq (1 + \Delta) M^p + \frac{\Delta M}{2} < 2 (1 + \Delta)M^p$.
\end{lemma}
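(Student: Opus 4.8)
The plan is to read the three bounds off the block structure of $H$ in \eqref{HNF}, coordinate by coordinate, starting from the top. Write $x^* = \binom{\alpha}{\beta}$ with $\alpha \in \mathbb{Z}^k$ and $\beta \in \mathbb{Z}^s$. Since $M$ in \eqref{MConst} is an upper bound on the length of a shortest vector (it is the minimum of the length of the actual lattice vector $H_{*\,n}$ and of the Minkowski bound), we have $\|H x^*\|_p \le M$, hence $\|H x^*\|_p^p = \sum_{i=1}^d |(Hx^*)_i|^p \le M^p$. The first $k$ rows of $H$ form the identity block, so $(Hx^*)_i = \alpha_i$ for $i \in 1:k$, which gives $\sum_{i=1}^k |\alpha_i|^p \le M^p$. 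Because the $\alpha_i$ are integers and $p \ge 1$, each term obeys $|\alpha_i| \le |\alpha_i|^p$, so $\|\alpha\|_1 \le \sum_{i=1}^k |\alpha_i|^p \le M^p$, proving part 1. I would also record that $Hx^* \ne 0$ lies in $\mathbb{Z}^d$, whence $M \ge \|Hx^*\|_p \ge 1$; this is used at the end.

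For part 2 I would exploit that the middle $s$ rows form a lower-triangular system. Writing $r_i = (Hx^*)_{k+i} = \sum_{j=1}^k a_{i\,j}\alpha_j + \sum_{j=1}^i b_{i\,j}\beta_j$, each $|r_i| \le M$, and isolating the diagonal term yields
\[
|\beta_i| \le \frac{1}{b_{i\,i}}\Bigl(|r_i| + \Bigl|\sum_{j=1}^k a_{i\,j}\alpha_j\Bigr| + \sum_{j=1}^{i-1} b_{i\,j}|\beta_j|\Bigr).
\]
Here I would invoke the standard HNF inequalities $0 \le a_{i\,j} \le b_{i\,i}$ and $0 \le b_{i\,j} \le b_{i\,i}$, together with part 1, to bound $\bigl|\sum_j a_{i\,j}\alpha_j\bigr| \le b_{i\,i}\|\alpha\|_1 \le b_{i\,i} M^p$ and $\sum_{j<i} b_{i\,j}|\beta_j| \le b_{i\,i}\sum_{j<i}|\beta_j|$. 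The crucial arithmetic fact is that the arrangement behind \eqref{HNF} forces $b_{i\,i} \ge 2$ for every $i \in 1:s$ (this is precisely what validates the step $\delta = \prod b_{i\,i} \le \Delta$, hence $s \le \log_2\Delta$). Dividing by $b_{i\,i} \ge 2$ then gives $|\beta_i| \le M^p + \tfrac{M}{2} + \sum_{j=1}^{i-1}|\beta_j|$. Setting $c = M^p + M/2$, this is the recurrence $f_i \le c + \sum_{j<i} f_j$, whose solution $f_i \le 2^{i-1} c$ I would establish by a one-line induction, proving part 2.

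Finally, for part 3 I would sum the geometric bound: $\|\beta\|_1 \le \sum_{i=1}^s 2^{i-1} c = (2^s-1)c < 2^s c$, and then use $2^s \le \prod_{i=1}^s b_{i\,i} = \delta \le \Delta$ (again from $b_{i\,i}\ge 2$) to get $\|\beta\|_1 \le \Delta(M^p + M/2)$. Adding part 1 gives $\|x^*\|_1 = \|\alpha\|_1 + \|\beta\|_1 \le (1+\Delta)M^p + \tfrac{\Delta M}{2}$. The remaining strict estimates $\Delta(M^p+M/2) < 2\Delta M^p$ and $(1+\Delta)M^p + \tfrac{\Delta M}{2} < 2(1+\Delta)M^p$ both reduce to $M/2 < M^p$, which holds because $M \ge 1$ and $p \ge 1$ force $M^{p-1}\ge 1 > 1/2$. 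I expect the only genuinely delicate point to be the justification that every diagonal entry satisfies $b_{i\,i} \ge 2$ (equivalently $2^s \le \Delta$); everything else is forward substitution, the integrality inequality $|\alpha_i|\le|\alpha_i|^p$, and routine summation.
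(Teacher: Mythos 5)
Your proof is correct and follows essentially the same route as the paper's: part 1 from the identity block plus integrality, part 2 by isolating the diagonal term in the lower-triangular rows and solving the recurrence $f_i \le c + \sum_{j<i} f_j$ to get the $2^{i-1}$ factor, and part 3 by summation with $2^s \le \delta \le \Delta$. The only difference is that you explicitly justify steps the paper leaves implicit (namely $b_{i\,i} \ge 2$, $M \ge 1$, and the inequality $|\alpha_i| \le |\alpha_i|^p$), which makes your write-up slightly more careful but not a different argument.
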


\begin{proof}
The statement 1) is trivial. 

For $\beta_1$ we have:
\[
|b_{1\,1} \beta_{1} + \sum_{i=1}^{k} a_{1\,i} \alpha_{i}| \leq M,
\]
\[
\sum_{i=1}^{k} a_{1\,i} \alpha_{i} - M \leq b_{1\,1} \beta_{1} \leq \sum_{i=1}^{k} a_{1\,i} \alpha_{i} + M,
\]
\[
- M^p - M/2 \leq \frac{1}{b_{1\,1}} (\sum_{i=1}^{k} a_{1\,i} \alpha_{i} - M) \leq \beta_{1} \leq \frac{1}{b_{1\,1}} (\sum_{i=1}^{k} a_{1\,i} \alpha_{i} + M) \leq M^p + M/2.
\]

For $\beta_j$ we have:
\[
|\sum_{i=1}^{j} b_{j\,i} \beta_{i} + \sum_{i=1}^{k} a_{j\,i} \alpha_{i}| \leq M,
\]
\[
\beta_{j} \leq \frac{1}{b_{j\,j}} (\sum_{i=1}^{j-1} b_{j\,i} \beta_{i} + \sum_{i=1}^{k} a_{j\,i} \alpha_{i} + M) \leq \sum_{i=1}^{j-1} \beta_{i} +  M^p + M/2,
\]
\[
|\beta_{j}| \leq  2^{j-1} (M^p + M/2).
\]

The statement 3) follows from the proposition 2).
\end{proof}

Let $Prob(l,v,u,C)$ denote the following problem:
\begin{align}
&\sum_{i=1}^l |\alpha_i|^p + \sum_{j=1}^s \left| \sum_{i=1}^l a_{j\,i} \alpha_i + v_j \right|^p + \sum_{j=1}^m \left|\sum_{i=1}^l \bar a_{j\,i} \alpha_i + u_j \right|^p \to \min \label{Prob1}\\
&\begin{cases} 
\alpha \in \mathbb{Z}^l \setminus \{0\} \\
||\alpha||_1 \leq C.\\
\end{cases} \notag
\end{align}

where $1 \leq l \leq k$, $1 \leq C \leq M^p$, $v \in \mathbb{Z}^s$, $u \in \mathbb{Z}^m$ and $||v||_\infty \leq 2 \Delta (1 + \Delta) M^p$, $||u||_\infty \leq 2 \Delta^{1 + \log_2 3} (1+\Delta) M^p$.

Let $\sigma(l,v,u,C)$ denote the optimal value of the $Prob(l,v,u,C)$ objective function, then we trivially have
\begin{equation}\label{Sigma1}
\sigma(1,v,u,C) = \min \{ |z|^p + \sum_{i=1}^s \left| a_{i\,1} z + v_i \right|^p + \sum_{i=1}^m \left| \bar a_{i\,1} z + u_i \right|^p : z \in \mathbb{Z},\, |z| \leq C \}.
\end{equation}

The following formula gives relations between $\sigma(l,v,u,C)$ and $\sigma(l-1,v,u,C)$, correctness of the formula could be checked directly: 
\begin{equation}\label{Sigma2}
\sigma(l,v,u,C) = \min \{ f(\bar v,\bar u,z) : z \in \mathbb{Z},\, |z| \leq C,\, \bar v_i = v_i + a_{i\,l} z,\, \bar u_i = u_i + \bar a_{i\,l}z \},
\end{equation}
where
\[
f(v,u,z) = 
\begin{cases}
\sigma(l-1,v,u,C),\,\text{ for }z = 0\\
|z|^p + \min\{ \sigma(l-1,v,u,C-|z|), ||v||_p^p+||u||_p^p \},\,\text{ for }z \not= 0\\
\end{cases}.
\]

Let $\overline{Prob}(l,v,u,C)$ denote the following problem:
\begin{align}
&\sum_{i=1}^k |\alpha_i|^p + \sum_{j=1}^s \left|\sum_{i=1}^k a_{j\,i} \alpha_i + \sum_{i=1}^{\min\{j,l\}} b_{j\,i} \beta_i + v_j\right|^p + \notag\\  &+\sum_{j=1}^m \left|\sum_{i=1}^k \bar a_{j\,i} \alpha_i + \sum_{i=1}^{\min\{j,l\}} \bar b_{j\,i} \beta_i + u_j \right|^p \to \min \label{Prob2}\\
&\begin{cases}
\alpha \in \mathbb{Z}^k,\,\beta \in \mathbb{Z}^l \\
1 \leq ||\alpha||_1 + ||\beta||_1 \leq C.\\
\end{cases} \notag
\end{align}

where $1 \leq l \leq s$, $1 \leq C \leq 2 (\Delta+1) M^p$ and the values of $v,u$ are the same as in \eqref{Prob1}.

Let $\bar \sigma(l,v,u,C)$ denote the optimal value of the $\overline{Prob}(l,v,u,C)$ objective function.

Again, it is easy to see that

\begin{equation}\label{SigmaBar1}
\bar\sigma(1,v,u,C) = \min\{f(\bar v,\bar u,z) : z \in \mathbb{Z},\, |z| \leq C,\, \bar v_i = v_i + b_{i\,1} z,\, \bar u_i = u_i + \bar b_{i\,1} z \},
\end{equation}
where 

\[
f(v,u,z) = 
\begin{cases}
\sigma(k,v,u,\min\{C,M^p\}),\,\text{ for }z = 0\\
\min\{\sigma(k,v,u,\min\{C-|z|,M^p\}), ||v||_p^p + ||u||_p^p\},\,\text{ for }z \not= 0\\
\end{cases}.
\]

The following formula gives relations between $\bar\sigma(l,v,u,C)$ and $\bar\sigma(l-1,v,u,C)$:

\begin{equation}\label{SigmaBar2}
\bar\sigma(l,v,u,C) = \min\{f(\bar v, \bar u, z) : z \in \mathbb{Z},\, |z| \leq C,\, \bar v_i = v_i + b_{i\,l} z,\, \bar u_i = u_i + \bar b_{i\,l} z \},
\end{equation}
where 

\[
f(v,u,z) = 
\begin{cases}
\bar\sigma(l-1,v,u,C),\,\text{ for }z = 0\\
\min\{\bar\sigma(l-1,v,u,C-|z|), ||v||_p^p + ||u||_p^p\},\,\text{ for }z \not= 0.\\
\end{cases}
\]

\begin{theorem}
There is an algorithm to solve the problem \eqref{ISVP}, which is polynomial on $n$, size $H$ and $\Delta$. The algorithm bit-complexity is equal to \\$O(n\, d\, M^{p(2+m+\log_2 \Delta)} \Delta^{3+4m+2\log_2 \Delta} \mult(\log \Delta))$, where $\mult(k)$ is the two $k$-bit integers multiplication complexity. Since $M \leq \Delta \sqrt[p]{m+1}$ (see \eqref{MConst}), the problem \eqref{ISVP}, parameterized by a parameter $\Delta$, is included to the FPT-complexity class for fixed $m$ and $p$.
\end{theorem}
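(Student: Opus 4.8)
The plan is to set up the two–level dynamic program whose top cell yields the optimal value of \eqref{ISVP}, and then to count its states. First I would observe that, writing $x=\binom{\alpha}{\beta}$ with $\alpha\in\mathbb{Z}^k$, $\beta\in\mathbb{Z}^s$, the quantity $||Hx||_p^p$ is exactly the function minimized in $\overline{Prob}(s,0,0,C_0)$ with all $\beta$-coordinates present ($l=s$), zero residuals ($v=u=0$), and budget $C_0=\lfloor 2(\Delta+1)M^p\rfloor$. By Lemma \ref{SVBounds} every optimal $x^*$ satisfies $||x^*||_1<2(1+\Delta)M^p$, so this $l_1$-budget discards no optimal solution; hence the optimum of \eqref{ISVP} equals $\sqrt[p]{\bar\sigma(s,0,0,C_0)}$, and a minimizer is recovered by the standard back-tracking through the argmin choices stored at each cell.

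Next I would justify the recurrences \eqref{Sigma1}--\eqref{SigmaBar2} by peeling off the last free variable. In $Prob(l,v,u,C)$ one fixes $\alpha_l=z$: if $z=0$ the residuals are unchanged and the surviving nonzero requirement passes to $\sigma(l-1,v,u,C)$; if $z\neq 0$ the residuals update to $\bar v=v+A_{*\,l}z$, $\bar u=u+\bar A_{*\,l}z$, the term $|z|^p$ is paid, and since the nonzero requirement is already met the remaining block $\alpha_{1:l-1}$ may either vanish, contributing $||\bar v||_p^p+||\bar u||_p^p$, or not, contributing $\sigma(l-1,\bar v,\bar u,C-|z|)$—whence the inner $\min$. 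The passage $\bar\sigma(1,\cdot)\to\sigma(k,\cdot)$ is identical, with the budget capped at $M^p$ because $||\alpha||_1\leq M^p$ by Lemma \ref{SVBounds}, and \eqref{SigmaBar2} peels off $\beta_l$ in the same way. Each identity is a case split on one coordinate, so correctness is immediate.

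The core of the argument is the state count. The index $l$ takes $O(n)$ values and the budget $C$ takes $O(\Delta M^p)$ values. From the admissible ranges $||v||_\infty\leq 2\Delta(1+\Delta)M^p$ and $||u||_\infty\leq 2\Delta^{1+\log_2 3}(1+\Delta)M^p$—which follow from $||\beta||_1<2\Delta M^p$ of Lemma \ref{SVBounds} together with $||B||_{\max}\leq\Delta$ and the bound $||(\bar A\;\bar B)||_{\max}<\Delta^{1+\log_2 3}$ of Lemma \ref{HNFElem}—the vector $v\in\mathbb{Z}^s$ ranges over at most $(O(\Delta^2 M^p))^s$ values and $u\in\mathbb{Z}^m$ over at most $(O(\Delta^{2+\log_2 3}M^p))^m$ values. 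Using $s\leq\log_2\Delta$, the number of distinct cells is therefore $O\bigl(n\,\Delta M^p\cdot\Delta^{2\log_2\Delta}M^{p\log_2\Delta}\cdot\Delta^{(2+\log_2 3)m}M^{pm}\bigr)$. Evaluating one cell enumerates $z$ over $O(\Delta M^p)$ values and, for each, updates the $s+m\leq d$ residual coordinates and forms $||\bar v||_p^p+||\bar u||_p^p$ together with one table look-up, i.e. $O(d)$ arithmetic operations on $O(\log\Delta)$-bit data at cost $\mult(\log\Delta)$ each.

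Multiplying the cell count by the per-cell work collects the factor $n\,d$, the power $M^{p}\cdot M^{p\log_2\Delta}\cdot M^{pm}\cdot M^{p}=M^{p(2+m+\log_2\Delta)}$, and a power of $\Delta$ with exponent $2+2\log_2\Delta+(2+\log_2 3)m\leq 3+2\log_2\Delta+4m$ (since $\log_2 3<2$), yielding the claimed bound $O\bigl(n\,d\,M^{p(2+m+\log_2\Delta)}\Delta^{3+4m+2\log_2\Delta}\mult(\log\Delta)\bigr)$. Finally, substituting $M\leq\Delta\sqrt[p]{m+1}$ from \eqref{MConst} turns the $M$-power into a power of $\Delta$ times a function of $m,p$ alone, so for fixed $m$ and $p$ the whole bound has the form $g(\Delta)\cdot(n\,d)^{O(1)}$, which is the FPT claim. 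The main obstacle I expect is the bookkeeping of the residual ranges: one must check that the values of $v$ and $u$ actually reachable along the recursion never leave the boxes asserted in $Prob$ and $\overline{Prob}$—this is precisely where Lemmas \ref{HNFElem} and \ref{SVBounds} are indispensable—since any overflow would break the polynomial state bound; pinning down the exact $\Delta$- and $M$-exponents is then routine once those ranges are fixed.
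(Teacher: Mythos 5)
Your proposal is correct and follows essentially the same route as the paper's own proof: it uses the same dynamic program over the cells $\sigma(l,v,u,C)$ and $\bar\sigma(l,v,u,C)$, anchors the top cell $\bar\sigma(s,0,0,2(1+\Delta)M^p)$ via Lemma \ref{SVBounds}, bounds the residual ranges via Lemmas \ref{HNFElem} and \ref{SVBounds}, and multiplies the state count by the per-cell work to obtain the stated bound $O\bigl(n\,d\,M^{p(2+m+\log_2\Delta)}\Delta^{3+4m+2\log_2\Delta}\mult(\log\Delta)\bigr)$. The only (immaterial) difference is organizational: you count all cells uniformly with the larger budget range $O(\Delta M^p)$, whereas the paper tallies the $\sigma$-levels (budget $\le M^p$) and $\bar\sigma$-levels (budget $\le 2(\Delta+1)M^p$) separately; both tallies land on the same exponents, and your justification of the recurrences and of the budget cap $\min\{C,M^p\}$ is in fact slightly more explicit than the paper's.
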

\begin{proof}
By Lemma \ref{SVBounds}, the objective function optimal value is equal to $\bar \sigma(s,0,0,2(1+\Delta)M^p)$. Using the recursive formula \eqref{SigmaBar2}, we can reduce instances of the type $\bar \sigma(s,\cdot,\cdot,\cdot)$ to instances of the type $\bar \sigma(1,\cdot,\cdot,\cdot)$. Using the formula \eqref{SigmaBar1}, we can reduce instances of the type $\bar \sigma(1,\cdot,\cdot,\cdot)$ to instances of the type $\sigma(k,\cdot,\cdot,\cdot)$. Using the formula \eqref{Sigma2}, we can reduce instances of the type $\sigma(k,\cdot,\cdot,\cdot)$ to instances of the type $\sigma(1,\cdot,\cdot,\cdot)$. Finally, an instance of the type $\sigma(1,\cdot,\cdot,\cdot)$ can be computed using the formula \eqref{Sigma1}. The bit-complexity to compute an instance $\sigma(1,v,u,C)$ is $O(C\, d\, \mult(\log \Delta) )$. The vector $v$ can be chosen using $(2\Delta(1+\Delta)M^p)^{s}$ possibilities and the vector $u$ can be chosen using $(2 \Delta^{1+\log_2 3} (1 + \Delta) M^p)^m$ possibilities, hence the complexity to compute instances of the type $\sigma(1,\cdot,\cdot,\cdot)$ is roughly 
\[
O(d\, M^{p(2+m+\log_2 \Delta)} \Delta^{1+4m+2\log_2 \Delta} \mult(\log \Delta)).
\] 
The reduction complexity of $\sigma(l,\cdot,\cdot,\cdot)$ to $\sigma(l-1,\cdot,\cdot,\cdot)$ (the same is true for $\bar \sigma$) consists of $O(C)$ minimum computations and $O(d C)$ integers multiplications of size $O(\log \Delta)$. So, the bit-computation complexity for instances of the type $\sigma(l,\cdot,\cdot,\cdot)$ for $1 \leq l \leq k$ can be roughly estimated as 
\[
O(k\, d\, M^{p(2+m+\log_2 \Delta)} \Delta^{1+4m+2\log_2 \Delta} \mult(\log \Delta)),
\]
and the bit-computation complexity for instances of the type $\bar \sigma(l,\cdot,\cdot,\cdot)$ for $1 \leq l \leq s \leq \log_2 \Delta$ can be roughly estimated as
\[
O(\log_2 \Delta\, d\, M^{p(2+m+\log_2 \Delta)} \Delta^{3+4m+2\log_2 \Delta} \mult(\log \Delta)).
\]

Finally, the algorithm complexity can be roughly estimated as
\[
O(n\, d\, M^{p(2+m+\log_2 \Delta)} \Delta^{3+4m+2\log_2 \Delta} \mult(\log \Delta)).
\]
\end{proof}

\section{The SVP for a special class of lattices}

In this section we consider the SVP \eqref{ISVP} for a special class of lattices that are induced by integral matrices without singular rank sub-matrices. Here, we inherit all notations and special symbols from the previous section.

Let the matrix $H$ have the additional property such that $H$ has no singular $n \times n$ sub-matrices. One of results of the paper \cite{AE16} states that if $n \geq f(\Delta)$, then the matrix $H$ has at most $n+1$ rows, where $f(\Delta)$ is a function that depends only on $\Delta$. The paper \cite{AE16} contains a super-polynomial estimate on the value of $f(\Delta)$. Here, we will show an existence of a polynomial estimate.

\begin{lemma}\label{NRowsHNF}
If $n > \Delta^{3 + 2\log_2 3} + \log_2 \Delta$, then $H$ have at most $n+1$ rows.
\end{lemma}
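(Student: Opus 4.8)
The plan is to argue by contradiction: suppose that $H$ has the form \eqref{HNF} with $d = k + s + m$ rows and that $m \geq 2$, while $n = k + s > \Delta^{3 + 2\log_2 3} + \log_2 \Delta$. I would then exhibit two distinct rows among the $m$ bottom rows of $H$, say rows indexed by the blocks $(\bar A\;\bar B)$, that lead to a singular $n \times n$ sub-matrix, contradicting the no-singular-minors hypothesis. The natural object to build is an $n \times n$ sub-matrix of $H$ whose determinant is forced to vanish by a counting/pigeonhole argument over the bounded entries of the bottom block.

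The key quantitative input is Lemma \ref{HNFElem}, which bounds the entries of $(\bar A\;\bar B)$: we have $\bar a_{j\,i} \leq \frac{\Delta}{2}(3^s + 1)$ and $\bar b_{j\,i} \leq \frac{\Delta}{2}(3^{s-i}+1)$, so in particular $\|(\bar A\;\bar B)\|_{\max} < \Delta^{1+\log_2 3}$. First I would observe that, since $s \leq \log_2 \Delta$ (property 3 of the HNF), a single row of the combined block $(\bar A\;\bar B)$ is an integer vector of length $n$ whose entries are bounded, but only the first $k + s$ coordinates matter, and the relevant information for distinguishing columns lives in the $\bar A$-part together with the small $\bar B$-part. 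The count of distinct possible columns of $\dbinom{A}{\bar A}$ was already seen in the proof of Theorem \ref{SimpleSVP} to be strictly less than $\Delta^{1+m(1+\log_2 3)}$; here, with the refined per-entry bounds and the constraint $s \leq \log_2\Delta$, I would instead count the number of distinct rows that the bottom block $(\bar A\;\bar B)$ can take. The hypothesis $n > \Delta^{3+2\log_2 3} + \log_2 \Delta$ should force more bottom rows than distinct possibilities, so two of them coincide (or become linearly dependent modulo the triangular structure of the top block).

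The central step, and the one I expect to be the main obstacle, is to convert ``two coincident (or dependent) bottom rows'' into an actual singular $n \times n$ sub-matrix of $H$. The subtlety is that the bottom rows live in a space of dimension $n = k + s$, whereas the entries we can bound and enumerate only give control on the $k + s$ coordinates through Lemma \ref{HNFElem}; I must ensure that when I select $n$ rows of $H$ to form a square sub-matrix, the two repeated rows are genuinely among the chosen $n$ and that the top identity block contributes the remaining independent structure. The clean way to do this is: take the $k$ identity rows and the $s$ rows of $(A\;B)$ as a fixed skeleton (these give a nonsingular $n\times n$ matrix of determinant $\delta$), then replace one of these rows by a bottom row $(\bar c\;\bar w)$; two bottom rows that agree in their relevant coordinates produce the same replacement determinant, and differencing them yields a row of zeros, hence a vanishing minor. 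I would make the enumeration precise by bounding the number of distinct vectors $(\bar c\;\bar w)$ using $\|\bar A\|_{\max} < \Delta^{1+\log_2 3}$ over $k$ coordinates and the triangularly-decaying bounds on $\bar B$ over $s \leq \log_2\Delta$ coordinates, multiplying these to get a bound comfortably below $n$ when $n > \Delta^{3 + 2\log_2 3} + \log_2\Delta$. Once the pigeonhole threshold is crossed, the contradiction is immediate, so the whole difficulty is in organizing the count so that the exponent $3 + 2\log_2 3$ emerges exactly rather than the larger exponent $1 + m(1+\log_2 3)$ from Theorem \ref{SimpleSVP}.
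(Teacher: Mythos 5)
There is a genuine gap, and it is the central step: your pigeonhole is applied in the wrong direction. You propose to count the number of distinct vectors that a \emph{bottom row} $(\bar c\;\bar w)$ of $(\bar A\;\bar B)$ can be, and to conclude that the hypothesis $n > \Delta^{3+2\log_2 3} + \log_2\Delta$ ``forces more bottom rows than distinct possibilities.'' But the number of bottom rows is $m = d - n$, which is exactly the quantity the lemma is trying to bound: under the contradiction hypothesis you may assume only $m = 2$ (the paper takes $H$ with precisely $n+2$ rows). A largeness assumption on $n$ says nothing about $m$, so no collision among two bottom rows can ever be forced. Moreover, your count itself cannot work as described: a bottom row has $k+s$ coordinates, so multiplying per-entry bounds over $k$ coordinates gives on the order of $(\Delta^{1+\log_2 3})^{k}$ possibilities --- exponential in $k$, not ``comfortably below $n$.'' The exponent $3+2\log_2 3$ cannot emerge from enumerating rows.

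The paper's argument pigeonholes over the $k$ \emph{columns}, which is exactly the resource that the hypothesis makes plentiful ($k > \Delta^{3+2\log_2 3}$ since $s \le \log_2\Delta$). With $m=2$, for indices $i \ne j \le k$ one deletes the identity rows $i$ and $j$ from $H$; the resulting $n\times n$ sub-matrix $\bar H$ has $|\det \bar H| = |\det \bar H^{ij}|$, where $\bar H^{ij}$ is the small $(s+2)\times(s+2)$ matrix whose first two columns are the $i$-th and $j$-th columns of $\dbinom{A}{\bar A}$ and whose remaining columns come from $\dbinom{B}{\bar B}$. Nonsingularity of all $n\times n$ sub-matrices forces all $k$ columns of $\dbinom{A}{\bar A}$ to be pairwise distinct. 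By the HNF structure ($0 \le a_{j\,i} < b_{j\,j}$, so at most $\delta \le \Delta$ choices for the $A$-part) and Lemma \ref{HNFElem} (at most $\Delta^{2(1+\log_2 3)}$ choices for the two $\bar A$-entries), there are at most $\Delta \cdot \Delta^{2(1+\log_2 3)} = \Delta^{3+2\log_2 3}$ possible columns, contradicting $k > \Delta^{3+2\log_2 3}$. Note that this exponent is precisely the column-count exponent $1+m(1+\log_2 3)$ from Theorem \ref{SimpleSVP} specialized to $m=2$ --- a signal that the column count, which you explicitly set aside, is the mechanism that produces the stated bound. Your ``skeleton replacement'' observation (two equal rows give a vanishing minor) is correct but idle, since nothing in your setup can produce two equal rows.
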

\begin{proof} Our proof of the theorem has the same structure and ideas as in the paper \cite{AE16}. We will make a small modification with usage of Lemma \ref{HNFElem}. 

Let the matrix $H$ be defined as illustrated in \eqref{HNF}. Recall that $H$ has no singular $n \times n$ sub-matrices. For the purpose of deriving a contradiction, assume that $n > \Delta^{3 + 2\log_2 3} + \log_2 \Delta$ and $H$ has precisely $n+2$ rows. Let again, as in the paper \cite{AE16}, $\bar H$ be the matrix $H$ without rows indexed by numbers $i$ and $j$, where $i,j \leq k$ and $i \not= j$. Observe, that 
\[
|\det \bar H| = |\det \underbrace{\begin{pmatrix}
a_{1\,i} & a_{1\,j}& b_{1\,1} &                   &    \\
\vdots   &\vdots    &              &   \ddots       &    \\
a_{s\,i}& a_{s\,j}&       \hdotsfor{2}          & b_{s\,s} \\
\bar a_{1\,i}& \bar a_{1\,j} &      \hdotsfor{2}          & \bar b_{1\,s} \\
\bar a_{2\,i}& \bar a_{2\,j} &      \hdotsfor{2}          & \bar b_{2\,s} \\
\end{pmatrix}}_{:={\bar H}^{ij}}|.
\]

The matrix ${\bar H}^{ij}$ is a nonsingular $(s+2)\times(s+2)$-matrix. This implies that the first two columns of ${\bar H}^{ij}$ must be different for any $i$ and $j$. By Lemma \ref{HNFElem} and the structure of HNF, there are at most $\Delta \cdot \Delta^{2(1+\log_2 3)}$ possibilities to choose the first column of ${\bar H}^{ij}$. Consequently, since $ n > \Delta^{3 + 2\log_2 3} + \log_2 \Delta$, then $k > \Delta^{3 + 2\log_2 3}$, and there must exist two indices $i \not= j$, such that $\det {\bar H}^{ij} = 0$. This is a contradiction.
\end{proof}

Using the previous theorem and Theorem \ref{SimpleSVP} of the previous section, we can develop a FPT-algorithm that solves the announced problem.

\begin{theorem}
Let $H$ be the matrix defined as illustrated in \eqref{HNF}. Let also $H$ have no singular $n \times n$ sub-matrices and $\Delta$ be the maximal absolute value of $n \times n$ minors of $H$. If $n > \Delta^{3 + 2\log_2 3} + \log_2 \Delta$, then there is an algorithm with the complexity $O(n \log n \log^2 \Delta)$ that solves the problem \eqref{ISVP}.
\end{theorem}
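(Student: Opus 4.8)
The plan is to deduce this theorem directly by chaining Lemma~\ref{NRowsHNF} with Theorem~\ref{SimpleSVP}, so that the only real work is translating the row-count bound into a bound on $m$ and then verifying that the hypothesis of the present theorem is strong enough to activate Theorem~\ref{SimpleSVP}.

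First I would observe that the hypothesis $n > \Delta^{3 + 2\log_2 3} + \log_2 \Delta$ is precisely the hypothesis of Lemma~\ref{NRowsHNF}. Applying that lemma, the matrix $H$ has at most $n+1$ rows, i.e.\ $d \leq n+1$. Recalling from the setup of \eqref{HNF} that $k + s = n$ and $d = k + s + m = n + m$, this forces $m = d - n \leq 1$. Thus the extra ``no singular rank sub-matrix'' hypothesis has been exploited solely to collapse the number of bottom rows $m$ down to at most one.

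Next I would check that the same hypothesis implies the hypothesis $n > \Delta^{1 + m(1+\log_2 3)} + \log_2 \Delta$ required by Theorem~\ref{SimpleSVP}. Since $m \in \{0,1\}$, the exponent $1 + m(1+\log_2 3)$ is at most $2 + \log_2 3$, and $2 + \log_2 3 \leq 3 + 2\log_2 3$ because $1 + \log_2 3 \geq 0$; hence $\Delta^{1+m(1+\log_2 3)} \leq \Delta^{3+2\log_2 3}$ for $\Delta \geq 1$, and the inequality needed by Theorem~\ref{SimpleSVP} follows immediately from the assumed one.

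Finally, I would invoke Theorem~\ref{SimpleSVP}, which then yields a polynomial-time algorithm of bit-complexity $O(n \log n \log \Delta (m + \log \Delta))$. Substituting the bound $m \leq 1$ gives $O(n \log n \log \Delta (1 + \log \Delta)) = O(n \log n \log^2 \Delta)$, exactly the claimed complexity. There is no genuine obstacle here, since the argument is a direct composition of the two earlier results; the only point requiring care is the bookkeeping ensuring that Lemma~\ref{NRowsHNF} delivers the sharp bound $m \leq 1$ rather than merely a constant bound on $m$, as it is precisely this sharpness that holds the final complexity at $\log^2 \Delta$ instead of a higher power of $\log \Delta$.
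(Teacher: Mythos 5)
Your proposal is correct and follows essentially the same route as the paper's own proof: apply Lemma~\ref{NRowsHNF} to conclude $m \in \{0,1\}$, verify that the hypothesis implies $n > \Delta^{1+m(1+\log_2 3)} + \log_2 \Delta$, and then invoke Theorem~\ref{SimpleSVP}. Your write-up is in fact slightly more careful than the paper's, since you explicitly substitute $m \leq 1$ into the complexity bound $O(n \log n \log \Delta\, (m + \log \Delta))$ to obtain $O(n \log n \log^2 \Delta)$, a step the paper leaves implicit.
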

\begin{proof}
If $n > \Delta^{3 + 2\log_2 3} + \log_2 \Delta$, then, by the previous theorem, we have $m=1$ or $m=0$. In both cases, we have $n > \Delta^{3 + 2\log_2 3} + \log_2 \Delta > \Delta^{1+m(1+\log_2 \Delta)} + \log_2 \Delta$. The last inequality meets the conditions of Theorem \ref{SimpleSVP} and the theorem follows. 
\end{proof}  

\section{Integer linear programming problem (ILPP)}

Let $H \in \mathbb{Z}^{d \times n}$, $c \in \mathbb{Z}^n$, $b \in \mathbb{Z}^d$, $rank(H) = n$ and let $\Delta$ be the maximal absolute value of $n \times n$ minors of $H$. Suppose also that all $n \times n$ sub-matrices of $H$ are nonsingular.
 
Consider the ILPP:
\begin{equation}\label{IPP}
\max\{c^\top x : H x \leq b,\, x \in \mathbb{Z}^n \}.
\end{equation}

\begin{theorem}
Let $n > \Delta^{3 + 2\log_2 3} + \log_2 \Delta$, then the problem \eqref{IPP} can be solved by an algorithm with the complexity 
\[
O( \log \Delta \cdot n^4 \Delta^5 (n+\Delta) \cdot \mult(\log \Delta + \log n + \log ||w||_\infty) ).
\]
\end{theorem}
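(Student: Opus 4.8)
The plan is to follow the reduction pattern used throughout the paper: first normalize the constraint matrix, and then exploit the fact that under the hypothesis on $n$ only a logarithmic number of constraints genuinely couple the variables.

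\textbf{Step 1 (normalization).} First I would reduce $H$ to the Hermite normal form \eqref{HNF} by a unimodular column transformation $U$, computable in polynomial time. Substituting $x = U y$ leaves $\mathbb{Z}^n$ invariant, turns the system into $H' y \le b$ with $H' = H U$, and replaces the objective vector by $w = U^\top c$; this is the source of the $\log \|w\|_\infty$ term in the arithmetic cost. Column operations preserve the absolute values of all $n \times n$ minors and the nonsingularity of all $n \times n$ submatrices, so the hypotheses of \eqref{IPP} persist. Since $n > \Delta^{3 + 2\log_2 3} + \log_2 \Delta$, Lemma \ref{NRowsHNF} applies and yields $d \le n+1$, i.e. $m \in \{0,1\}$; together with $s \le \log_2 \Delta$ and the entry bounds of Lemma \ref{HNFElem} this means that only $s + m \le \log_2 \Delta + 1$ of the constraints couple the large block $x_1, \dots, x_k$, the remaining $k$ rows being the scalar upper bounds $x_i \le b_i$.

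\textbf{Step 2 (linear relaxation).} Next I would solve the continuous relaxation of \eqref{IPP}. Because all subdeterminants of $H'$ are bounded by $\Delta$, this is possible in strongly polynomial time by Tardos' algorithm \cite{TAR86}; alternatively one may directly enumerate the at most $n+1$ vertex-defining subsystems, since $d \le n+1$. If the relaxation is infeasible we report infeasibility. If it is unbounded, the rational recession cone $\{y : H' y \le 0\}$ carries an integral ray $r$ with $w^\top r > 0$, which certifies that \eqref{IPP} itself is unbounded. Otherwise the relaxation attains an optimum at a vertex $y^*$, whose defining $n \times n$ subsystem is nonsingular by hypothesis.

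\textbf{Step 3 (proximity and structured dynamic programming).} In the bounded case I would invoke a proximity estimate in the spirit of \cite{AE16}: an integral optimum of \eqref{IPP} lies in a neighbourhood of $y^*$ whose radius is bounded by a polynomial in $n$ and $\Delta$ and, crucially, is independent of $\|b\|$. It then remains to optimize over this neighbourhood. Since $x_1, \dots, x_k$ enter only the $s + m \le \log_2 \Delta + 1$ coupling rows and the block $B$ is lower triangular, I would run a dynamic program modelled on the recursions \eqref{Sigma2} and \eqref{SigmaBar2} of Section 2, but maximizing the linear form $w^\top y$ rather than minimizing a norm. Processing the $s$ triangular variables level by level, the state records the partial values of the coupling constraints, which by Lemma \ref{HNFElem} and the proximity bound range over intervals of length polynomial in $n$ and $\Delta$. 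Summing the per-level cost over the $s \le \log_2 \Delta$ levels produces the $\log \Delta$ factor, the state ranges and the budget on $\|y\|_1$ produce the $n^4 \Delta^5(n+\Delta)$ factor, and the arithmetic is on integers of size $O(\log \Delta + \log n + \log \|w\|_\infty)$.

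\textbf{Main obstacle.} The delicate point is keeping the search genuinely polynomial. A naive box search around $y^*$ would cost $\mathrm{poly}(n,\Delta)^n$, and a naive level-by-level enumeration that stored a full vector of partial constraint values would cost $\Delta^{\Theta(\log \Delta)}$; both are too expensive. What rescues the bound is the combination of the $\|b\|$-free proximity estimate with the triangular, low-coupling structure guaranteed by Lemmas \ref{HNFElem} and \ref{NRowsHNF}, which collapses the effective dimension of the dynamic program to $O(\log \Delta)$. Pinning down the proximity constant and verifying that the recursion correctly propagates the linear objective while respecting integrality and the upper bounds $x_i \le b_i$ is where the real work of the proof lies; the unbounded case, handled by the explicit integral ray of Step 2, is a routine addition.
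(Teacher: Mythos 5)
Your Steps 1 and 2 match the paper's setup: reduction to the HNF \eqref{HNF}, Lemma \ref{NRowsHNF} forcing $m \in \{0,1\}$, and the LP relaxation combined with the Tardos proximity bound (the paper's \eqref{TardoshT}, $||\binom{\alpha}{y}||_\infty \leq n\Delta$ after flipping $\alpha \to b_{1:k} - \alpha$ and adding slacks $y$). The genuine gap is in Step 3. The dynamic program you describe --- processing the triangular variables level by level with a state recording ``the partial values of the coupling constraints'' --- is exactly the Section 2 scheme \eqref{Sigma2}/\eqref{SigmaBar2}, and its state space has size $\mathrm{poly}(n,\Delta)^{s+m}$ with $s$ as large as $\log_2 \Delta$. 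Since your proximity radius necessarily depends on $n$ (it is $n\Delta$ per coordinate), this is $(n\Delta)^{\Theta(\log \Delta)}$: not only far above the claimed $O(\log\Delta \cdot n^4\Delta^5(n+\Delta)\cdot \mult(\cdot))$, but not even FPT, because of the $n^{\Theta(\log\Delta)}$ factor. You identify this obstacle yourself, but your proposed cure --- that the ``triangular, low-coupling structure collapses the effective dimension of the dynamic program to $O(\log\Delta)$'' --- merely restates the problem: $O(\log\Delta)$ state coordinates, each ranging over a set of size polynomial in $n$ and $\Delta$, is precisely the quasi-polynomial state space you had just declared too expensive. No mechanism in your write-up compresses it further.

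The paper's mechanism, which your proposal is missing, is a reduction to Gomory's group minimization problem. The triangular variables $\beta$ are not processed level by level at all: they are eliminated outright via $\beta = B^{-1}(\hat b + A\alpha - y)$, and then $B$ is put into Smith normal form $B = P^{-1} S Q^{-1}$. Integrality of $\beta$ becomes a system of congruences $Gx \equiv g \ (\mathrm{mod}\ S)$ in the remaining nonnegative variables $x = (\alpha, y)$, as in \eqref{GroupMin}, and the decisive count is that the number of residue classes modulo $S$ equals $\det S = \delta \leq \Delta$ --- a single factor of $\Delta$, not $\Delta^{\log\Delta}$. The DP then runs over all $n$ variables of $x$ (not over $s$ levels), with state consisting of a group element together with the value $\eta$ of the one extra inequality contributed by the row $(\bar A \ \bar B)$ (here $m \leq 1$ and Lemma \ref{HNFElem} bound its coefficients, giving $|\eta| \leq n^2\Delta^3(n+\Delta)$). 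Multiplying group size $\Delta$, the range of $\eta$, the transition range $|z| \leq n\Delta$, the $n$ levels, and the $O(\log\Delta)$-coordinate arithmetic modulo $S$ reproduces exactly the stated complexity. Without the SNF/group step, or an equivalent device compressing the $s$ equality constraints into a state of size $O(\Delta)$, your argument cannot reach the claimed bound.
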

\begin{proof}
By Lemma \ref{NRowsHNF}, for $n > \Delta^{3 + 2\log_2 3} + \log_2 \Delta$ the matrix $H$ can have at most $n+1$ rows.

Let $v$ be an optimal solution of the linear relaxation of the problem \eqref{IPP}. Let us also suppose that $\Delta = |\det(H_{1:n\,*})| > 0$ and $H_{1:n\,*} v = b_{1:n}$. First of all, the matrix $H$ need to be transformed to the HNF. Suppose that it has the same form as in \eqref{HNF}. 

Let us split the vectors $x$ and $c$ such that $x = \dbinom{\alpha}{\beta}$ and $c = \dbinom{c_\alpha}{c_\beta}$. We note that the sub-matrix $(\bar A \, \bar B)$ from $H$ is actually a row. The problem \eqref{IPP} takes the form:
\begin{align*}
& c_\alpha^\top \alpha + c_\beta^\top \beta \to \max\\
&\begin{cases}
\alpha \leq  b_{1:k}\\
A \alpha + B \beta  \leq b_{k+1:k+s}\\
\bar A \alpha  + \bar B \beta  \leq b_{d}\\
\alpha \in \mathbb{Z}^k,\, \beta \in \mathbb{Z}^s.\\
\end{cases}\\
\end{align*}

As in \cite{AE16}, the next step consists of the integral transformation $\alpha \to b_{1:k} - \alpha$ and from an introduction of the slack variables $y \in \mathbb{Z}_+^{s}$ for rows with numbers in the range $k+1:k+s$. The problem becomes:
\begin{align*}
&- c_\alpha^\top b_{1:k} + c_\alpha^\top \alpha - c_\beta^\top \beta \to \min\\
&\begin{cases}
B \beta - A \alpha + y = \hat b\\
\bar B \beta - \bar A \alpha \leq \hat b_d\\
\alpha \in \mathbb{Z}_+^k,\, y \in \mathbb{Z}_+^s,\, \beta \in \mathbb{Z}^s,\\
\end{cases}
\end{align*}
where $\hat b = b_{k+1:k+s} - A\, b_{1:k}$ and $\hat b_d = b_d - \bar A b_{1:k}$.

We note that 
\begin{equation} \label{TardoshT}
||\binom{\alpha}{y}||_\infty \leq n \Delta
\end{equation} 
due to the classical theorem proved by Tardosh (see \cite{SCHR98,TAR86}). The Tardosh's theorem states that if $z$ is an optimal integral solution of \eqref{IPP}, then $||z - v||_\infty \leq n \Delta$. Since $A_{1:n\,*} v = b_{1:n}$ for the optimal solution of the relaxed linear problem $v$, the slack variables $\alpha$ and $y$ must be equal to zero vectors, when the $x$ variables are equal to $v$. 

Now, using the formula $\beta = B^{-1} (\hat b + A \alpha - y)$, we can eliminate the $\beta$ variables from the last constraint and from the objective function:
\begin{align*}
& -c_\alpha^\top b_{1:k} -c_\beta^\top B^{-1} \hat b + (c_\alpha^\top - c_\beta^\top B^{-1} A) \alpha + c_\beta^\top B^{-1} y \to \min\\
&\begin{cases}
B \beta - A \alpha + y = \hat b\\
(\bar B B^{*} A - \Delta \bar A) \alpha - \bar B B^{*} y \leq \Delta \hat b_d - \bar B B^{*} \hat b\\
\alpha \in \mathbb{Z}_+^k,\, y \in \mathbb{Z}_+^s,\, \beta \in \mathbb{Z}^s,
\end{cases}
\end{align*}
where the last line was additionally multiplied by $\Delta$ to become integral, and where $B^* = \Delta B^{-1}$ is the adjoint matrix for $B$.

Finally, we transform the matrix $B$ into the Smith normal form (SNF) \cite{SCHR98,STORS96,ZHEN05} such that $B = P^{-1} S Q^{-1}$, where $P^{-1}$, $Q^{-1}$ are unimodular matrices and $S$ is the SNF of $B$. After making the transformation $\beta \to Q \beta$, the initial problem becomes equivalent to the following problem:
\begin{align}
& w^\top x \to \min \label{GroupMin}\\
&\begin{cases}
G x \equiv g \,(\text{mod}\, S)\\
h x \leq h_0 \\
x \in \mathbb{Z}_+^n,\, ||x||_{\infty} \leq n \Delta,
\end{cases}\notag
\end{align}
where $w^\top = (\Delta c_\alpha^\top - c_\beta^\top B^{*} A, \; c_\beta^\top B^{*})$, $G = (P \; -PA) \mod S$, $g = P \hat b \mod S$, $h = (\bar B B^{*} A - \Delta \bar A, \; - \bar B B^{*})$, and $h_0 = \Delta \hat b_d - \bar B B^{*} \hat b$. The inequalities $||x||_{\infty} \leq n \Delta$ are additional tools to localize an optimal integral solution that follows from Tardosh's theorem argumentation (see \eqref{TardoshT}). 
  
Trivially, $||(G\;g)||_{\max} \leq \Delta$. Since $||\bar A||_{\max} \leq \Delta^{1 + \log_2 3}$, $||A||_{\max} \leq \Delta$, and $||\bar B B^*||_{\max} \leq \Delta$, we have that $||h||_{\max} \leq \Delta^2( n  + \Delta^{\log_2 3} )$. 

Actually, the problem \eqref{GroupMin} is the classical Gomory's group minimization problem \cite{GOM65} (see also \cite{HU70}) with an additional linear constraint (the constraint $||x||_{\infty} \leq n \Delta$ only helps to localize the minimum). As in \cite{GOM65}, it can be solved using the dynamic programming approach. 

To do that, let us define subproblems $Prob(l,\gamma,\eta)$:
\begin{align*}
& w_{1:l}^\top x \to \min\\
&\begin{cases}
G_{*\,1:l} x \equiv \gamma \,(\text{mod}\, S)\\
h_{1:l} x \leq \eta \\
x \in \mathbb{Z}_+^l,
\end{cases}
\end{align*}
where $l \in 1:n$, $\gamma \in \inth(G)\mod S$, $\eta \in \mathbb{Z}$, and $|\eta| \leq n^2 \Delta^3 (n+\Delta)$.

Let $\sigma(l,\gamma,\eta)$ be the objective function optimal value of the $Prob(l,\gamma,\eta)$. When the problem $Prob(l,\gamma,\eta)$ is unfeasible, we put $\sigma(l,\gamma,\eta) = +\infty$. Trivially, the optimum of \eqref{IPP} is $\sigma(n,g,\min\{h_0,\,n^2 \Delta^3 (n+\Delta)\})$.

The following formula gives the relation between $\sigma(l,*,*)$ and $\sigma(l-1,*,*)$:
\[
\sigma(l,\gamma,\eta) = \min \{\sigma(l-1,\gamma - z G_{*\,l},\eta-z h_{l}) +z w_l : |z| \leq n \Delta \}.
\]
The $\sigma(1,\gamma,\eta)$ can be computed using the following formula: 
\[
\sigma(1,\gamma,\eta) = \min \{z w_1 : z G_{*\,1} \equiv \gamma \,(\text{mod } S),\, z h_1 \leq \eta,\, |z| \leq n \Delta \}.
\]

Both, the computational complexity of $\sigma(1,\gamma,\eta)$ and the reduction complexity of $\sigma(l,\gamma,\eta)$ to $\sigma(l-1,\cdot,\cdot)$ for all $\gamma$ and $\eta$ can be roughly estimated as:
\[
O( \log \Delta \cdot n^3 \Delta^5 (n+\Delta) \cdot \mult(\log \Delta + \log n + \log ||w||_\infty) ).
\]
The final complexity result can be obtained multiplying the last formula by $n$.  

\end{proof}

\section*{Conclusion}
Here, we present FPT-algorithms for SVP instances parameterized by the lattice determinant on lattices induced by near square matrices and on lattices induced by matrices with no singular sub-matrices. In the first case, the developed algorithm is applicable for the norm $l_p$ for any finite $p \geq 1$. In the second case, the algorithm is also applicable for the $l_\infty$ norm. Additionally, we present a FPT-algorithm for ILPP instances, whose constraint matrices have no singular sub-matrices.

In the full version of the paper, we are going to extend the results related to the SVP on more general classes of norms. Next, we are going to extend result related to the ILPP for near square constraint matrices. Finally, we will present a FPT-algorithm for the simplex width computation problem.

\end{document}